\theoremstyle{plain}
\newtheorem{thm}{Theorem}[section]
\newtheorem*{thm*}{Theorem}
\newtheorem{prop}{Proposition}[section]
\newtheorem{lem}{Lemma}[section]
\newtheorem{conj}{Conjecture}[section]
\theoremstyle{definition}
\newtheorem{defn}{Definition}[section]
\theoremstyle{remark}
\newtheorem{rem}{Remark}[section]
\numberwithin{equation}{section}
\DeclareMathOperator{\ke}{Ker}
\newcommand{\N}{\mathbb N}
\newcommand{\Z}{\mathbb Z}
\begin{document}
	
	\title{Non-Wieferich property of prime ideals and a conjecture of Erd\"os}

	\author{Ruofan Li}
	
	\address{Department of Mathematics, Jinan University, Guangzhou, 510632, China}
	
	\email{liruofan@jnu.edu.cn}

	\author{Jiuzhou Zhao$^*$}
	
	\address{School of Mathematics and Statistics, Key Laboratory of Engineering Modeling and Statistical Computation of Hainan Province, Hainan University, Haikou 570228, China}
	
	\email{zhao9zone@gmail.com}

	\subjclass[2020]{11A63,11A41}
	
	\keywords{Radix representation, Digital problems, Wieferich primes}
	
	\thanks{$^*$Corresponding author.}
	
	
	
	\begin{abstract}
		Let $K$ be a number field with ring of integers $\mathcal{O}$ and $\alpha\in\mathcal{O}$. For any prime ideal $\mathfrak{p}$ of $\mathcal{O}$, we obtain its higher $\alpha$-Wieferich property, which implies a nonexistence theorem for higher Wieferich unramified prime ideals. If $\beta\in\mathcal{O}$ is relatively prime to $\alpha$ and all prime ideal factors of $(\beta)$ are unramified and have residue degree $1$, we apply our higher $\alpha$-Wieferich property to establish the asymptotic equidistribution of digits in $\beta$-adic expansions of $\alpha^n$, which is a generalization of the Dupuy-Weirich theorem. When $(\beta)$ have ramified prime ideal factors, we also  obtain a result on the block complexity of $\beta$-adic expansions of $\alpha^n$.
	\end{abstract}
	
	\maketitle
	
	\section{Introduction}\label{sc:intro}
	\subsection{A conjecture of Erd\"os and Dupuy-Weirich theorem}\label{2512091258}
	For positive integers $m$ and $q$, if $$m = \sum_{i=0}^{k} a_{i} q^{i}$$ for some integers $a_{i}$ between $0$ and $q-1$, then we write $(m)_q = a_{k} \cdots a_{0}$ and call it the $q$-ary expansion of $m$. An interesting fact is that, other than $(2^0)_3=1$, $(2^2)_{3}=11$ and $(2^8)_3=100111$, no other values of n such that $(2^n)_3$ omits the digit $2$ is known. In \cite{Erdos1979}, Erd\"os  proposed the following conjecture:
	\begin{conj}\label{2601051536}
		There are only finitely many powers of $2$ whose 
		ternary expansion omits the digit $2$.
	\end{conj}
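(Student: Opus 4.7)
The plan is to feed the paper's generalized Dupuy--Weirich theorem into a quantitative counting argument. Applied to $K = \Q$, $\alpha = 2$, $\beta = 3$ --- where $(3)$ is unramified with residue degree $1$ and coprime to $(2)$ --- that theorem gives asymptotic equidistribution of digits in the ternary expansion of $2^n$, so the density of $n \leq N$ for which $(2^n)_3$ omits the digit $2$ tends to zero. To upgrade ``density zero'' to ``finite'', I would exploit the fact that if $(2^n)_3$ omits the digit $2$ then $2^n = \sum_{i \in S} 3^i$ for some $S \subseteq \{0,1,\dots,L-1\}$ with $L = \lfloor n \log_3 2\rfloor + 1 \approx 0.631\, n$. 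Thus $2^n$ lies in a set of size $2^L$ inside $[0, 3^L)$; the natural probabilistic density is $(2/3)^L$, and $\sum_n (2/3)^{0.631\, n} < \infty$, which is the Borel--Cantelli input that, if it could be made rigorous, would close the conjecture.

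The key technical step is to turn this heuristic into an actual summability bound. First I would upgrade the paper's asymptotic digit-equidistribution to a \emph{quantitative} discrepancy bound: for each $k$, the distribution of $2^n \bmod 3^k$ as $n$ ranges over $\{1, \dots, N\}$ is within $O(N^{-\delta})$ of uniform on $(\Z/3^k\Z)^\times$, with $\delta > 0$ a power-saving extracted from the higher $\alpha$-Wieferich growth of $\mathrm{ord}_{3^k}(2)$. Second, I would establish \emph{joint} equidistribution of the digits of $2^n$ at distinct positions, so that ``digit at position $i$ equals $2$'' for different $i$ become effectively independent up to a power-saving error. Third, applying the union bound over admissible $\{0,1\}$-patterns of length $L$ and summing over $n$ would produce a finite total, hence finitely many exceptional exponents.

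The principal obstacle is controlling the \emph{high-order} digits of $2^n$ simultaneously with the low-order ones. The Wieferich machinery of the paper is naturally $3$-adic and governs $2^n \bmod 3^k$ for $k$ small relative to $n$, i.e.\ the last few digits. The leading digits, however, are governed by the Archimedean distribution of $\{n\log_3 2\}$ and by diophantine-approximation properties of $\log 2 / \log 3$ --- a fundamentally different analytic object. Joint $p$-adic/Archimedean equidistribution over \emph{all} $L \asymp n$ positions simultaneously is not implied by any known Wieferich-type bound, and I would expect this step to require either a quantitative subspace-theorem attack on the $S$-unit equation $2^n = \sum_{i\in S} 3^i$, or a dramatic strengthening of the Wieferich estimate to cover $k$ linear in $n$. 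This is precisely why Conjecture \ref{2601051536} remains open, and the most the proposed route is likely to yield unconditionally is a sharpening of the density-zero statement with an explicit power-saving rate --- falling short of finiteness by exactly the gap between $k = o(n)$ and $k \asymp n$.
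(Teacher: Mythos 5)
This statement is Conjecture~\ref{2601051536}, an open problem; the paper does not prove it (it only surveys partial results such as Narkiewicz's bound \eqref{2402121459} and proves the \emph{averaged} digit statement, Theorem~\ref{2505041026}), and your proposal does not prove it either --- as you yourself concede in your final paragraph. So the verdict is that there is a genuine gap: the entire second and third steps of your plan (a power-saving discrepancy bound for $2^n \bmod 3^k$ with $k$ growing, joint near-independence of digits at distinct positions, and a Borel--Cantelli summation over all $L\asymp n$ positions) are not carried out and are not available from the paper's machinery. The paper's Wieferich-type input (Theorem~\ref{2505091458}) only controls the growth of the order of $2$ modulo $3^k$, which governs the \emph{period} of the low-order digit string as $n$ varies; it says nothing about the distribution of $2^n \bmod 3^k$ for a single $n$ when $k$ is comparable to $n$, which is exactly what a finiteness proof needs.

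There is also an overclaim in your very first step. Theorem~\ref{2505041026} (and the original Dupuy--Weirich theorem) asserts $\lim_{m\to\infty} f_{\alpha,m}(b)=1/\#\mathcal D$, where $f_{\alpha,m}(b)$ is a Ces\`aro average over $n$ of the digit frequency in the first $m$ positions, with $m$ fixed before averaging. From this first-moment information alone you cannot conclude that the set of $n$ with $(2^n)_3$ omitting the digit $2$ has density zero: if $A_m$ denotes the set of residues in a period whose first $m$ digits omit $2$, the bound $f_{2,n,m}(2)\le 1$ together with the average being $1/3-o(1)$ only yields $\#A_m/l_m\le 2/3+o(1)$. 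The density-zero (indeed power-saving) statement $M(N)\le 1.62\,N^{\log_3 2}$ is Narkiewicz's separate combinatorial result and does not follow from the equidistribution-on-average theorem. Your diagnosis of the principal obstacle --- that the $3$-adic Wieferich regime covers only $k=o(n)$ while the conjecture requires simultaneous control up to $k\asymp n$, where Archimedean/diophantine information about $\log 2/\log 3$ enters --- is accurate, and it is precisely why this remains a conjecture rather than a theorem in the paper.
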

	Most progress towards Conjecture \ref{2601051536} has been in the form of upper bounds on
	\[M(N):=\#\{n\in\Z\cap[1,N]\colon (2^n)_3\text{ omits the digit $2$}\},\]
	where the symbol $\#$ denote cardinality. The best known bound on $M(N)$ is due to Narkiewicz \cite{Narkiewicz1980} who proved that
	\begin{equation}\label{2402121459}
		M(N)\le1.62N^{\sigma}, \quad\text{	where $\sigma:=\log_32\approx0.63092$. }
	\end{equation}
	We refer the reader to \cite{Dupuy2016a,Holdum2015,Kennedy2000,Lagarias2009,LZ2025} for more results related to Narkiewicz's result. Recently, Dimitrov and Howe \cite[Theorem 1.2]{DH2025} have demonstrated that if $n\notin\{0,2,8\}$, then the ternary expansion of $2^n$ either contains the digit $2$ or includes at least twenty-six $1$'s. 
	
	Based on computer experiments, Dupuy and Weirich \cite{Dupuy2016a} believe that a stronger version of Conjecture \ref{2601051536} holds.
	\begin{conj}\label{2504301154}
		Let $p$ and $q$ be distinct primes and $b\in\{0,1,\dots,q-1\}$. Denote by $d_n(b)$ the number of $b$'s appearing in $(p^n)_q$. Then
		\begin{equation}\label{2504300835}
			\lim_{n\to\infty}\frac{d_n(b)}{n\log_q(p)}=\frac{1}{q}
		\end{equation}
	\end{conj}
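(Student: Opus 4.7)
My plan is to translate the digit-counting problem into a statement about the distribution of $p^n$ modulo high powers of $q$, then invoke the higher $\alpha$-Wieferich property announced in the abstract. Writing $p^n = \sum_{i=0}^{L_n - 1} a_i q^i$ with $L_n = \lfloor n \log_q p \rfloor + 1$ and $a_i \in \{0, 1, \ldots, q-1\}$, the digit $a_i$ equals $\lfloor p^n/q^i \rfloor \bmod q$, so it is determined entirely by $p^n \bmod q^{i+1}$. The counting function decomposes as
\[
d_n(b) = \sum_{i=0}^{L_n - 1} \mathbf{1}\bigl[\, a_i = b\, \bigr],
\]
and the goal is to show this sum equals $(L_n/q)(1 + o(1))$ as $n \to \infty$.

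The first main step invokes the higher $\alpha$-Wieferich property for the prime ideal $(q)$. In the setting of the conjecture this ideal is unramified of residue degree $1$ in $\Z$, so the paper's nonexistence result applies directly and yields a large multiplicative order for $p$ in $(\Z/q^k\Z)^{\ast}$: namely, the order grows like $q^{k-1} \cdot \mathrm{ord}_q(p)$, hence the cyclic orbit $\{p^n \bmod q^k\}_n$ covers a large subgroup of $(\Z/q^k\Z)^\ast$. For a fixed large $n$, this translates into nontrivial information on how the residues $p^n \bmod q^{i+1}$ are distributed as $i$ ranges over the digit positions.

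The second step is a two-regime equidistribution argument. For low-to-middle indices $i$ (say $i \le L_n - n^{\varepsilon}$), the digit $a_i$ is controlled through $q$-adic residues, and a Weyl-type character sum estimate on $(\Z/q^{i+1}\Z)^\ast$ combined with the lower bound on orders from the non-Wieferich property forces each value $b\in\{0,\ldots,q-1\}$ to appear with frequency $1/q + o(1)$. For the top indices close to $L_n$, the digit $a_i$ is determined by the Archimedean fractional part $\{p^n / q^{i+1}\}$, and equidistribution of $n\log_q p \bmod 1$ (using irrationality of $\log_q p$ for distinct primes) handles this regime.

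The hard part will be patching the two regimes together and bounding correlations between consecutive digits $a_i$ and $a_{i+1}$: these are not independent, and a naive union bound would inherit unacceptable error terms. The decisive input is an effective, uniform-in-$k$ version of the non-Wieferich property, which is presumably where the paper's nonexistence theorem for higher Wieferich unramified prime ideals enters in its quantitative form; obtaining such a uniform statement, and then bootstrapping it into genuine equidistribution (rather than just a positive-proportion lower bound for each digit), is where I expect the main difficulty to lie.
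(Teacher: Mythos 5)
This statement is a \emph{conjecture}, not a theorem of the paper: the paper proves no such result, and indeed it cannot be proved by the methods here, since already the special case $p=2$, $q=3$ implies Erd\"os's conjecture that only finitely many powers of $2$ omit the digit $2$ in ternary, which is open. What the paper actually establishes (the Dupuy--Weirich theorem and its generalization, Theorem \ref{2505041026}) is the Ces\`aro-averaged statement: the average over $n$ of the digit frequencies in the first $m$ positions tends to $1/q$ as $m\to\infty$. The non-Wieferich input (Theorem \ref{2505091458}) is used exactly there: it shows the orbit $H_m=\{p^n \bmod q^m\}$ has size growing by a factor $q$ (resp.\ $N(\beta)$) at each step, so that summing digit counts over the \emph{whole orbit} can be done by a clean recursion. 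That argument says nothing about a single fixed exponent $n$.

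The concrete gap in your sketch is the second step. Knowing that $p$ has large multiplicative order modulo $q^k$ tells you that the set $\{p^m \bmod q^k : m\in\Z\}$ is a large subgroup of $(\Z/q^k\Z)^\times$; it does not tell you how the single residue $p^n \bmod q^{i+1}$, for one fixed $n$ and varying $i$, distributes its digits. There is no "Weyl-type character sum estimate" that converts a lower bound on the order of $p$ into equidistribution of the digits of one element $p^n$: character sums over the subgroup generated by $p$ average over the exponent, which is precisely the averaging you are not allowed to do if you want \eqref{2504300835} for each $n$. The difficulty you defer to the end --- correlations between digit positions for a fixed $n$ --- is not a patching issue but the entire content of the conjecture, and no effective or uniform version of Theorem \ref{2505091458} supplies it. So the proposal is not a proof, and the paper does not claim one; the honest comparison is that the paper retreats to the average over $n$ (equation \eqref{2504301428} and Theorem \ref{2505041026}), where the non-Wieferich property suffices.
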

	\begin{rem}
		(i)  Note that the denominator $n\log_q(p)$ in \eqref{2504300835} is the length of the $q$-ary expansion of $p^n$, this limit means that the proportion of digits $b$'s in $(p^n)_q$ tends to $1/q$ as $n\to\infty$.\\
		(ii) Take $p=2$ and $q=3$, \eqref{2504300835} implies that $d_n(2)>0$ for $n$ large enough, which implies Conjecture \ref{2601051536}.
	\end{rem}
	
	The following theorem is the first progress on Conjecture \ref{2504301154}, which considers the average proportion of $b$'s in $(p^n)_q$. Given the $q$-ary expansion
	\[p^n=a_0+a_1q+\cdots+a_Nq^N,\]
	where $N=\lfloor\log_q(p^n)\rfloor$ and $a_i\in\{0,1,\dots,q-1\}$. Let $(p^n)_{q,i}:=a_i$. Denote
	\[f_{p,n,m}(b):=\frac{\#\{i\in\Z\cap[0,m-1]\colon (p^n)_{q,i}=b\}}{m},\]
	for $b\in\{0,1,\dots,q-1\}$. Let $L_m:=\{p^n+q^m\Z\colon n\in\Z\}\subset(\Z/q^m\Z)^\times$ and $l_m:=\#L_m$. It is clear that for each $m\in\Z_{\ge1}$, the average proportion of $b$'s in the first $m$ digits satisfies
	\begin{equation}\label{2504301428}
		f_{p,m}(b):=\lim_{N\to\infty}\frac{1}{N}\sum_{n=1}^{N}f_{p,n,m}(b)=\frac{1}{l_m}\sum_{n=1}^{l_m}f_{p,n,m}(b).
	\end{equation}
	\begin{thm*}[Dupuy and Weirich~{\cite[Theorem~3]{Dupuy2016a}}]
		Let $p$ and $q$ be distinct primes and $b\in\{0,1,\dots,q-1\}$, then
		\[\lim_{m\to\infty}f_{p,m}(b)=1/q.\]
	\end{thm*}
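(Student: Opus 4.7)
The plan is to interchange the summation in \eqref{2504301428} and analyse one digit position at a time. Because the $i$-th base-$q$ digit of $p^n$ depends only on $p^n\bmod q^{i+1}$, and the reduction $L_m\twoheadrightarrow L_{i+1}$ is a surjective group homomorphism whose fibres all have size $l_m/l_{i+1}$, one obtains
\[
f_{p,m}(b)=\frac{1}{m}\sum_{i=0}^{m-1}g_i(b),\qquad g_i(b):=\frac{\#\{r\in L_{i+1}:\lfloor r/q^i\rfloor\equiv b\pmod q\}}{l_{i+1}}.
\]
The problem thus reduces to showing that $g_i(b)=1/q$ for all but finitely many $i$.

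To this end, I would analyse the tower $L_{i+1}\twoheadrightarrow L_i$ for $i\ge 1$. The kernel of $(\Z/q^{i+1}\Z)^\times\to(\Z/q^i\Z)^\times$ is the order-$q$ subgroup $\{1+cq^i:c\in\Z/q\Z\}$, so $[L_{i+1}:L_i]\in\{1,q\}$. In the case $l_{i+1}=ql_i$, fix any $s\in L_i$ and any lift $\tilde s\in L_{i+1}$; its fibre is
\[
\{\tilde s(1+cq^i)\bmod q^{i+1}:c=0,\dots,q-1\}=\{\tilde s+(c\tilde s)q^i\bmod q^{i+1}:c=0,\dots,q-1\}.
\]
Since $\tilde s$ is a unit mod $q$, the scalars $c\tilde s$ run through $\Z/q\Z$ as $c$ does, so the $i$-th digit of the fibre elements takes every value in $\{0,\dots,q-1\}$ exactly once. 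Summing over $s\in L_i$ yields $g_i(b)=1/q$ on the nose.

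It remains to show that the collapse case $l_{i+1}=l_i$ (where $g_i(b)$ is uncontrolled but bounded by $1$) occurs only finitely often. Setting $d:=\mathrm{ord}_q(p)$ and $e:=v_q(p^d-1)\ge 1$, the lifting-the-exponent lemma gives $l_i=d$ for $1\le i\le e$ and $l_i=dq^{i-e}$ for $i\ge e$, so $l_{i+1}=ql_i$ for every $i\ge e$; a minor modification using $(\Z/2^m\Z)^\times\cong\Z/2\Z\times\Z/2^{m-2}\Z$ handles $q=2$. Consequently $g_i(b)=1/q$ for all $i\ge e$, and
\[
f_{p,m}(b)=\frac{m-e}{mq}+\frac{O(e)}{m}\longrightarrow\frac{1}{q}\quad(m\to\infty).
\]
The main obstacle is the fibre calculation in the equidistribution case: the essential point is that the fibres are multiplicative cosets, not additive, and the twist by the unit $\tilde s$ is precisely what forces the $i$-th digits to form a complete residue system. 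Once this is secured, the finitely many bad levels are absorbed into an $O(1/m)$ error.
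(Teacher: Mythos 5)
Your argument is correct. The two pillars are exactly right: (a) for each digit position $i$, the count over $L_m$ reduces to the count over $L_{i+1}$ because the reduction $L_m\to L_{i+1}$ is a homomorphism with equal fibres; (b) whenever $l_{i+1}=ql_i$, the kernel of $L_{i+1}\to L_i$ is the full kernel $\{1+cq^i\}$ of $(\Z/q^{i+1}\Z)^\times\to(\Z/q^i\Z)^\times$, and the multiplicative twist by the unit $\tilde s$ makes the $i$-th digits of each fibre a complete residue system, so $g_i(b)=1/q$; and (c) lifting-the-exponent shows the collapse case $l_{i+1}=l_i$ occurs only for $i$ below a fixed level $e$ (with the usual adjustment at $q=2$), giving an $O(1/m)$ error.

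Note that the paper does not reprove this statement directly: it quotes it from Dupuy--Weirich and instead proves the number-field generalization (Theorem \ref{2505041026}), of which the present statement is the case $K=\Q$, $\alpha=p$, $\beta=q$. Your route is the same in substance but organized differently. Where you invoke classical LTE to get eventual full order growth $l_{i+1}=ql_i$, the paper derives the analogous growth $h_m=N(\beta)h_{m-1}$ from its higher Wieferich theorem (Theorem \ref{2505091458}), proved via the Teichm\"uller splitting and a binomial valuation lemma — which, in the unramified rational case, is precisely the LTE computation. Where you average the per-position frequencies $g_i(b)$ and show each stabilized position is exactly equidistributed via the explicit coset description of the fibre, the paper runs a recursion on the cumulative count $D_m(b)=N(\beta)D_{m-1}(b)+h_{m-1}$, getting the equidistribution of the newly added top digit from the counting fact that a fibre of size $N(\beta)$ must exhaust all $N(\beta)$ possible digit extensions. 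Your per-digit version is arguably more transparent in the rational case; the paper's recursion formulation is what transfers verbatim to $\beta$-adic expansions over number fields. The only point worth flagging is that you (like the paper) take the identity \eqref{2504301428} — Ces\`aro average equals average over one period $L_m$ — as given; since it is part of the stated setup, that is fine.
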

	
	In this paper, we are going to generalize the above Dupuy-Weirich theorem to general number fields. For this purpose, we also generalize their work \cite[Theorem~6]{Dupuy2016a} on Wieferich primes. It is worth mentioning that we have observed new phenomena in the setting of general number fields.  
	
	\subsection{Wieferich primes}
	Let $p$ be a rational prime. For any $k\in\Z_{\ge1}$, we denote \[\langle2\rangle_{p^k}:=\{2^n\!\!\pmod{p^k}\colon n\in\Z_{\ge1}\}\]
	as the multiplicative group generated by $2$ modulo $p^k$. 
	The prime $p$ is called \emph{(classical) Wieferich} if $\langle2\rangle_{p}$ is isomorphic to $\langle2\rangle_{p^2}$. 
	
	Such primes were initially investigated in \cite{Wie09} in relation to the Fermat's Last Theorem. In that work, the author demonstrated that if \( x^q + y^q = z^q \) forms a Fermat triple, then \( q \) must be a Wieferich prime. Whether there exist infinitely many Wieferich primes remains an open problem, even assuming the validity of the ABC conjecture. For an extensive discussion of Wieferich primes, we refer the reader to \cite{Crandall,Katz2015,Lang1990,Silverman1988}. 
	
	From now on, fix a number field $K$ and let  $\mathcal{O}$ be its ring of integers. 
	\begin{defn}
		Let $\mathfrak{p}$ be a prime ideal of the ring $\mathcal{O}$, $r\in\Z_{\ge1}$ and $\alpha\in\mathcal{O}\backslash\mathfrak{p}$. We say that the prime ideal $\mathfrak{p}$ is \emph{$\alpha$-Wieferich} at $r$ if the multiplicative group generated by $\alpha$ modulo $\mathfrak{p}^r$ is isomorphic to the multiplicative group generated by $\alpha$ modulo $\mathfrak{p}^{r-1}$.
	\end{defn}
	
	We now establish a higher $\alpha$-Wieferich property of prime ideals when the integer $r$ is large enough. Let  $G_k=\langle\alpha\rangle_{\mathfrak{p}^k}=\{\alpha^n\!\!\pmod{\mathfrak{p}^k}\colon n\in\Z_{\ge1}\}$ be the multiplicative group generated by $\alpha$ modulo $\mathfrak{p}^k$. There is a natural homomorphism from $G_r$ to $G_{r-1}$, and we denote the kernel of it by $\ke(G_r\to G_{r-1})$. It is clear that $\mathfrak{p}$ is $\alpha$-Wieferich at $r$ if and only if $\#\ke(G_r\to G_{r-1})=1$.
	\begin{thm}\label{2505091458}
		Let $\mathfrak{p}$ be a prime ideal of the ring $\mathcal{O}$. Assume that $\alpha\in\mathcal{O}\backslash\mathfrak{p}$ is not a root of unity, then there is a positive integer $v$ such that
		\[\#\ke(G_r\to G_{r-1})=\begin{cases}
			1,&\quad \text{ if } r-v\not\equiv1\pmod{e}\\
			p,&\quad \text{ if } r-v\equiv1\pmod{e}
		\end{cases}\]
		holds for all $r> v$, where $e$ is the ramification index of $\mathfrak{p}$ and $p$ is the rational prime lying below $\mathfrak{p}$. In particular, if $\mathfrak{p}$ is unramified, then $\mathfrak{p}$ is not $\alpha$-Wieferich at $r$ for all $r>v$. 
	\end{thm}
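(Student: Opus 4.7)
The plan is to work locally in the completion $\mathcal{O}_{\mathfrak{p}}$, using the isomorphism $(\mathcal{O}/\mathfrak{p}^r)^{\times}\cong(\mathcal{O}_{\mathfrak{p}}/\mathfrak{p}^r\mathcal{O}_{\mathfrak{p}})^{\times}$ to bring in the standard machinery of principal units. Normalize the $\mathfrak{p}$-adic valuation $v_{\mathfrak{p}}$ so that a uniformizer has valuation $1$; then $v_{\mathfrak{p}}(p)=e$. Let $f_1$ be the multiplicative order of $\alpha$ in the residue field $\mathcal{O}/\mathfrak{p}$, and set $\beta:=\alpha^{f_1}\in 1+\mathfrak{p}\mathcal{O}_{\mathfrak{p}}$. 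Since $\alpha$ is not a root of unity, $\beta^{p^a}\ne 1$ for every $a\ge 0$, so the positive integers $v_a:=v_{\mathfrak{p}}(\beta^{p^a}-1)$ are all finite.

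The analytic heart is the binomial identity
\[
(1+y)^p - 1 = py + \binom{p}{2}y^2 + \cdots + y^p,
\]
whose term valuations, for $v_{\mathfrak{p}}(y)=k$, are $e+k,\,e+2k,\dots,e+(p-1)k,\,pk$. Hence $v_{\mathfrak{p}}((1+y)^p-1)\ge\min(e+k,pk)\ge k+1$, and when $k>e/(p-1)$ the term $py$ strictly minimizes, giving the exact equality $v_{\mathfrak{p}}((1+y)^p-1)=k+e$. Iterating, the sequence $(v_a)$ is strictly increasing and eventually exceeds $e/(p-1)$. Choose $a_0$ minimal with $v_{a_0}>e/(p-1)$, and set $v:=v_{a_0}$; a direct induction then produces the clean formula
\[
v_a = v + (a-a_0)e \qquad\text{for every } a\ge a_0.
\]

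Next I compute $\#G_r$. The principal unit quotient $U^1/U^r$, where $U^k:=1+\mathfrak{p}^k\mathcal{O}_{\mathfrak{p}}$, is a finite $p$-group, so $\beta$ has order $p^{a(r)}$ in it, where $a(r):=\min\{a\ge 0:v_a\ge r\}$. Since $\alpha^n\equiv 1\pmod{\mathfrak{p}^r}$ forces $f_1\mid n$, writing $n=f_1 m$ yields $\#G_r=f_1\cdot p^{a(r)}$, and hence
\[
\#\ke(G_r\to G_{r-1})=\#G_r/\#G_{r-1}=p^{a(r)-a(r-1)}.
\]
For $r>v$, the stability relation $v_a=v+(a-a_0)e$ (combined with $v_a<v$ for $a<a_0$, by strict monotonicity) gives $a(r)=a_0+\lceil(r-v)/e\rceil$, and the elementary identity
\[
\left\lceil\frac{r-v}{e}\right\rceil-\left\lceil\frac{r-1-v}{e}\right\rceil=\begin{cases}1,&r-v\equiv 1\pmod{e},\\0,&\text{otherwise},\end{cases}
\]
delivers the stated dichotomy. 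When $\mathfrak{p}$ is unramified, $e=1$ makes the congruence automatic, so $\#\ke(G_r\to G_{r-1})=p>1$ for every $r>v$.

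The main obstacle is controlling the initial range $a<a_0$: one needs the non-root-of-unity hypothesis to keep every $v_a$ finite and to force $(v_a)$ past the threshold $e/(p-1)$, and one must wait until $a\ge a_0$ for the crisp recursion $v_{a+1}=v_a+e$ to take hold. This is exactly why the theorem promises only eventual periodicity in $r$, with the offset $v$ depending on $\alpha$ and $\mathfrak{p}$.
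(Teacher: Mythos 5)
Your argument is correct, and its analytic core coincides with the paper's: the binomial estimate $v_\mathfrak{p}\big((1+y)^p-1\big)\ge\min\{e+v_\mathfrak{p}(y),\,p\,v_\mathfrak{p}(y)\}$, with exact value $e+v_\mathfrak{p}(y)$ once $v_\mathfrak{p}(y)>e/(p-1)$, is precisely Lemma~\ref{2601061529}, and your endgame (orders of the form $p^{a_0+\lceil(r-v)/e\rceil}$, kernel size as a ratio of orders of cyclic groups, and the ceiling identity) is the same computation the paper carries out. Where you genuinely diverge is in how the prime-to-$p$ part of $\langle\alpha\rangle$ is stripped off: the paper splits $(\mathcal{O}_\mathfrak{p}/\mathcal{P}^k)^\times\cong(\mathcal{O}_\mathfrak{p}/\mathcal{P})^\times\times U_1^{(k)}$ via the Teichm\"uller section and replaces $\alpha$ by the principal unit $\eta=\alpha\,\tau(a_0)^{-1}$, whereas you pass to $\beta=\alpha^{f_1}$ (with $f_1$ the residue order of $\alpha$) and verify $\#G_r=f_1\,p^{a(r)}$ directly by the two divisibilities $f_1p^{a(r)}\mid \operatorname{ord}(\alpha\bmod\mathfrak{p}^r)$ and $\operatorname{ord}(\alpha\bmod\mathfrak{p}^r)\mid f_1p^{a(r)}$, which you state correctly; this avoids constructing the Teichm\"uller lift altogether and is somewhat more elementary, while the paper's splitting gives the structural isomorphism \eqref{2512241146} as a by-product. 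Two minor observations: your threshold $v_{a_0}>e/(p-1)$ is the sharp form of the paper's cruder condition $v(l)>e$ (both suffice for the recursion $v_{a+1}=v_a+e$), and your formula $a(r)=a_0+\lceil(r-v)/e\rceil$ must also be invoked at $r-1=v$, which your remark that $v_a<v$ for $a<a_0$ (by strict monotonicity) indeed justifies.
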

	
	\begin{rem}
		The condition that $\alpha$ is not a root of unity is necessary. If $\alpha$ is a $m$-th root of unity, then $\langle \alpha\rangle=\{\alpha,\alpha^2,\dots,\alpha^m=1\}\subseteq\mathcal{O}$ is a finite set. Take 
		$$N=\max\Big\{l\in\Z_{\ge1}\colon  \alpha^i-\alpha^j\in\mathfrak{p}^{l}\backslash\mathfrak{p}^{l+1}\text{ for some }i,j\in\{1,2,\dots,m\}\Big\}.$$
		For any $r>N$, the multiplicative group generated by $\alpha$ modulo $\mathfrak{p}^{r}$ is isomorphic to $\{\alpha,\alpha^2,\dots,\alpha^m\}$. Hence $\mathfrak{p}$ is $\alpha$-Wieferich at $r$ for all $r>N+1$. 
	\end{rem}

	\subsection{Asymptotic equidistribution of digits}
	Theorem \ref{2505091458} enables us to generalize the Dupuy-Weirich \cite{Dupuy2016a} theorem to general number fields. 
	
	We first recall the concept of \emph{$\beta$-adic expansions}, which is a natural generalization of $q$-ary expansions. Denote $N(\beta):=\#(\mathcal{O}/\beta\mathcal{O})$ for every  nonzero $\beta\in \mathcal O$.  
	\begin{defn}
		Fix $\beta\in \mathcal O$ with norm $N(\beta)>1$ and a set of representatives $\mathcal D$ of the quotient group $\mathcal{O} / \beta \mathcal{O}$ with $0\in\mathcal D$. For every $x \in \mathcal{O}$, the \emph{$\beta$-adic expansion} of $x$ (with respect to $\mathcal D$) is the unique sequence $(b_i)_{i\in\N} \in \mathcal D^{\N}$ such that 
		\begin{equation}\label{2401171345}
			x=\lim_{i\rightarrow\infty}b_0+b_1\beta+\cdots+b_i\beta^i
		\end{equation}
		with respect to $\mathfrak{p}$-adic topology for any prime ideal $\mathfrak{p}$ in $\mathcal{O}$ dividing $\beta$. 
	\end{defn}
	The existence and uniqueness of $\beta$-adic expansions were proved in \cite[Section 2]{LZ2025}. 
	
	We define the \emph{$i$-th digit function} $(x)_{\beta,i}:=b_i$ for $x\in\mathcal{O}$ with $x=\sum_{i=0}^{\infty}b_i\beta^i$. Given an algebraic integer $\alpha\in\mathcal{O}$, we denote the frequency of digit $b\in\mathcal D$ in the $m$-truncated expansion of $\alpha^n$ by
	\begin{equation}\label{2601061743}
		f_{\alpha,n,m}(b):=\frac{\#\{i\colon 0\le i<m,\;(\alpha^n)_{\beta,i}=b\}}{m}. 
	\end{equation}
	Consider the Ces\`aro average $\frac{1}{N}\sum_{n=1}^{N}f_{\alpha,n,m}(b)$, it is clear that
	\begin{equation}\label{2601061744}
		f_{\alpha,m}(b):=\lim_{N\to\infty}\frac{1}{N}\sum_{n=1}^{N}f_{\alpha,n,m}(b)=\frac{1}{h_m}\sum_{n=1}^{h_m}f_{\alpha,n,m}(b),
	\end{equation}
	where $h_m:=\#H_m$ and $H_m:=\langle\alpha+\beta^m\mathcal{O}\rangle\subset(\mathcal{O}/\beta^m\mathcal{O})^\times$. 
	
	We establish the asymptotic equidistribution of digits. Denote $N(\mathfrak{a}):=\#(\mathcal{O}/\mathfrak{a})$ for any ideal $\mathfrak{a}\subseteq\mathcal{O}$.
	\begin{thm}\label{2505041026}
		Let $(\beta)=\mathfrak{p}_1^{g_1}\cdots\mathfrak{p}_h^{g_h}$ satisfy that $\mathfrak{p}_i$ is unramified and \(N(\mathfrak{p}_i) = p_i\) for all $i$, where $p_i$ is the rational prime lying below $\mathfrak{p}_i$. If $\alpha$ is relatively prime to $\beta$ and is not a root of unity, then
		\begin{equation}\label{2512251734}
			\lim_{m\to\infty}f_{\alpha,m}(b)=1/\#\mathcal{D}
		\end{equation}
		holds for all $b\in\mathcal{D}$.
	\end{thm}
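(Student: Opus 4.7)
The plan is to reduce the Cesaro average to a local digit-distribution probability at each position, and then analyze that probability through the kernel of the tower $H_{i+1}\twoheadrightarrow H_i$. Since $(\alpha^n)_{\beta,i}$ depends only on $\alpha^n\bmod\beta^{i+1}$, and the surjection $H_m\twoheadrightarrow H_{i+1}$ has fibers of constant size $h_m/h_{i+1}$ for $m\ge i+1$, expanding the definition \eqref{2601061744} and swapping the order of summation collapses it to
\[
f_{\alpha,m}(b)=\frac{1}{m}\sum_{i=0}^{m-1}P_{i+1,i}(b),\qquad P_{i+1,i}(b):=\frac{\#\{x\in H_{i+1}\colon(x)_{\beta,i}=b\}}{h_{i+1}}.
\]
It thus suffices to establish $P_{i+1,i}(b)=1/\#\mathcal{D}$ for every $b\in\mathcal{D}$ and every sufficiently large $i$; Cesaro averaging then delivers \eqref{2512251734}.

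The core step is to identify the kernel $K_{i+1}:=\ker(H_{i+1}\to H_i)$ for large $i$. The map $1+c\beta^i\mapsto c$ embeds $K_{i+1}$ as an additive subgroup of $\mathcal{O}/\beta\mathcal{O}$ of order $h_{i+1}/h_i$. By CRT, $(\mathcal{O}/\beta^m)^\times\cong\prod_j(\mathcal{O}/\mathfrak{p}_j^{mg_j})^\times$, and the projection of $H_m$ onto the $j$-th factor is exactly the cyclic group $G^{(j)}_{mg_j}$ studied in Theorem~\ref{2505091458}. Since each $\mathfrak{p}_j$ is unramified, that theorem yields $\#\ker(G^{(j)}_r\to G^{(j)}_{r-1})=p_j$ for all $r\gg 0$, and iterating $g_j$ consecutive levels gives $o_j(i+1)/o_j(i)=p_j^{g_j}$, where $o_j(i):=|G^{(j)}_{ig_j}|$. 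Combining these local statements with the hypothesis $N(\mathfrak{p}_j)=p_j$, so the $p_j$-parts of the orders $o_j(i)$ grow in independent arithmetic directions, one concludes $h_{i+1}/h_i=\prod_jp_j^{g_j}=N(\beta)=\#\mathcal{D}$ for $i\gg 0$, meaning $K_{i+1}$ exhausts the entire additive kernel $\mathcal{O}/\beta$. This bookkeeping---translating the pointwise per-prime kernel statement of Theorem~\ref{2505091458} through the CRT decomposition and the lcm-structure of $h_m=|H_m|$---is the step I expect to be the main obstacle.

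With $K_{i+1}\cong\mathcal{O}/\beta$ in hand, each fiber of $H_{i+1}\twoheadrightarrow H_i$ is $\tilde y\cdot K_{i+1}$ for any lift $\tilde y$ of $y\in H_i$, and expanding modulo $\beta^{i+1}$ yields
\[
\bigl(\tilde y(1+c\beta^i)\bigr)_{\beta,i}\equiv(\tilde y)_{\beta,i}+(\tilde y)_{\beta,0}\,c\pmod{\beta}.
\]
Because $\alpha$ is coprime to $\beta$, the first digit $(\tilde y)_{\beta,0}$ is a unit in $\mathcal{O}/\beta$, so $c\mapsto(\tilde y)_{\beta,i}+(\tilde y)_{\beta,0}c$ is a bijection of $\mathcal{O}/\beta$; as $c$ traverses $K_{i+1}\cong\mathcal{O}/\beta$, the $i$-th digit visits every element of $\mathcal{D}$ exactly once. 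Summing across the $h_i$ fibers gives $P_{i+1,i}(b)=h_i/h_{i+1}=1/\#\mathcal{D}$ for $i\gg 0$ and every $b\in\mathcal{D}$, which Cesaro-averages to \eqref{2512251734}.
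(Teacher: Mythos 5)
Your proposal is correct in substance and follows the same two-step skeleton as the paper's proof: first use the CRT decomposition together with Theorem \ref{2505091458} (unramified, residue degree one) to show that $h_{m}/h_{m-1}=N(\beta)$ for all large $m$, and then observe that a kernel of full size $N(\beta)=\#\mathcal D$ forces the newly revealed digit to run through $\mathcal D$ exactly once in each fiber. The differences are organizational: the paper propagates this through a recursion $D_m(b)=N(\beta)D_{m-1}(b)+h_{m-1}$ for the total digit count and divides by $mh_m$ at the end, while you Ces\`aro-average the per-position probabilities $P_{i+1,i}(b)$; and in the fiber step the paper uses a pure counting argument (fiber elements agree modulo $\beta^{m-1}$, are distinct modulo $\beta^m$, and number $\#\mathcal D$, so every last digit occurs once), whereas you exhibit the affine bijection $c\mapsto(\tilde y)_{\beta,i}+(\tilde y)_{\beta,0}c$; both work. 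The one substantive point of divergence is the step you yourself flag: you treat $H_m$ correctly as the diagonal cyclic subgroup of $\prod_j G^{(j)}_{g_jm}$, so that $h_m=\operatorname{lcm}_j\#G^{(j)}_{g_jm}$, and your conclusion $h_{i+1}/h_i=\prod_j p_j^{g_j}$ then requires the residue characteristics $p_j$ to be pairwise distinct (if $\mathfrak p_1,\mathfrak p_2$ lie over the same rational prime $p$, the $p$-part of the lcm is a maximum rather than a product; e.g.\ $K=\Q(i)$, $\beta=5$, $\alpha=2$ gives $h_{m+1}/h_m=5\neq 25=N(\beta)$, and with $\mathcal D=\{a+bi:0\le a,b\le4\}$ the digit $i$ never occurs, so this is a genuine boundary case of the statement, not merely of the argument). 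The paper's proof faces the same issue at \eqref{2505301308}, where $H_m$ is asserted to be isomorphic to the full product $\prod_j G^{(j)}_{g_jm}$ rather than its diagonal; so relative to the paper your treatment of this step is, if anything, the more careful one, and under the implicit assumption that the $p_j$ are pairwise distinct both proofs are complete and essentially identical.
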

	
	When the prime ideal factorization of $(\beta)$ contains a ramified prime ideal $\mathfrak{p}$, despite that its $\alpha$-Wieferich property (as in Theorem \ref{2505091458}) is not sufficient for the proof of the asymptotic equidistribution of digits, we are still able to obtain a result on the block complexity of $\beta$-adic expansions of $\alpha^n$. 
	
	Denote the number of $m$-truncated blocks by
	\[\mathcal{C}_m(\alpha):=\#\Big\{(b_0,\dots,b_{m-1})\in\mathcal{D}^m\colon\alpha^n\equiv\sum_{i=0}^{m-1}b_i\beta^i \!\!\pmod{\beta^m\mathcal{O}}\;\;\text{for some $n$}\Big\},\]
	we define the block complexity as 
	\[\mathcal{C}(\alpha):=\lim_{m\to\infty}\frac{\log\mathcal{C}_m(\alpha)}{m\log N(\beta)},\quad\text{if the limit exists}.\]
	\begin{thm}\label{2512261042}
		Let $(\beta)=\mathfrak{p}_1^{g_1}\cdots\mathfrak{p}_h^{g_h}$.  If $\alpha$ is not a root of unity and relatively prime to $\beta$, then
		\begin{equation}\label{2512261352}
			\mathcal{C}(\alpha)=\frac{\sum_{j=1}^{h}g_je_j^{-1}\log p_j}{\sum_{j=1}^{h}g_jf_j\log p_j},
		\end{equation}
		where $p_j$ is the rational prime lying below $\mathfrak{p}_j$, $e_j$ is the ramification index of $\mathfrak{ p}_j$ and $f_j$ is the residue degree of $\mathfrak{ p}_j$.
	\end{thm}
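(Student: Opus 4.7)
The plan is to rephrase $\mathcal{C}_m(\alpha)$ as a cyclic-group order modulo $\beta^m$ and then combine the Chinese Remainder Theorem with Theorem~\ref{2505091458} to compute its asymptotic growth. By the uniqueness of $\beta$-adic expansions, the map $(b_0,\dots,b_{m-1})\mapsto\sum_{i=0}^{m-1}b_i\beta^i$ is a bijection from $\mathcal{D}^m$ onto a complete set of representatives of $\mathcal{O}/\beta^m\mathcal{O}$, so $\mathcal{C}_m(\alpha)$ equals the number of residue classes of the form $\alpha^n+\beta^m\mathcal{O}$; that is, $\mathcal{C}_m(\alpha)=h_m=\#H_m$. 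The theorem thus reduces to computing the limit of $\log h_m/(m\log N(\beta))$ as $m\to\infty$.

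Next, CRT identifies $(\mathcal{O}/\beta^m\mathcal{O})^\times$ with $\prod_{j=1}^{h}(\mathcal{O}/\mathfrak{p}_j^{mg_j})^\times$, under which $\alpha$ becomes the diagonal tuple $(\alpha\bmod\mathfrak{p}_j^{mg_j})_j$. Since $H_m$ is cyclic, its order equals the least common multiple of the orders of the coordinate projections:
\[h_m=\mathrm{lcm}_{1\le j\le h}\,\#G_{mg_j}^{(j)},\qquad G_r^{(j)}:=\langle\alpha\rangle_{\mathfrak{p}_j^r}.\]
Applying Theorem~\ref{2505091458} to each $\mathfrak{p}_j$ produces $v_j\ge1$ such that for $r>v_j$,
\[\#G_r^{(j)}=\#G_{v_j}^{(j)}\cdot p_j^{M_j(r)},\qquad M_j(r)=\frac{r-v_j}{e_j}+O(1),\]
because in each block of $e_j$ consecutive values of $r>v_j$ exactly one satisfies $r-v_j\equiv1\pmod{e_j}$ and contributes a kernel of order $p_j$, while the others contribute trivially. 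Taking $r=mg_j$ yields $v_{p_j}\bigl(\#G_{mg_j}^{(j)}\bigr)=mg_j/e_j+O(1)$, and contributions of rational primes other than $p_j$ to $\#G_{mg_j}^{(j)}$ remain bounded (they only appear in the fixed cofactor $\#G_{v_j}^{(j)}$, a divisor of $\#(\mathcal{O}/\mathfrak{p}_j^{v_j})^\times$).

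Finally I would assemble the LCM. Writing $\log h_m=\sum_\ell v_\ell(h_m)\log\ell$ summed over rational primes $\ell$ and combining the above asymptotics, the bounded auxiliary contributions collapse to $O(1)$, yielding $\log h_m=m\sum_{j=1}^{h}(g_j/e_j)\log p_j+O(1)$. Dividing by $m\log N(\beta)=m\sum_{j=1}^{h}g_jf_j\log p_j$ (using $N(\mathfrak{p}_j)=p_j^{f_j}$) and letting $m\to\infty$ produces~\eqref{2512261352} and simultaneously shows that the defining limit of $\mathcal{C}(\alpha)$ exists. The main technical hurdle will be this last LCM-to-sum step: one must keep careful track of the $O(1)$ errors in the Wieferich windows $M_j(r)$ and of the auxiliary prime contributions hidden in the cofactors $\#G_{v_j}^{(j)}$ so that, after taking logarithms and dividing by $m$, the additive expression $\sum_j(g_j/e_j)\log p_j$ emerges cleanly in the numerator.
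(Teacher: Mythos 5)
Your reduction $\mathcal{C}_m(\alpha)=h_m$ and your use of Theorem \ref{2505091458} to get that the exponent of $p_j$ in $\#G^{(j)}_{g_jm}$ is $g_jm/e_j+O(1)$ are correct and parallel the paper. Where you diverge is the CRT step: the paper asserts the direct-product decomposition $H_m\cong G^{(1)}_{g_1m}\times\cdots\times G^{(h)}_{g_hm}$ in \eqref{2505301308} and multiplies the orders, whereas you note (correctly, as a matter of group theory) that the cyclic group generated by the diagonal tuple has order $\operatorname{lcm}_{j}\,\#G^{(j)}_{g_jm}$; the two agree only when the component orders are coprime, up to bounded factors.

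Your last step, however --- passing from the lcm to the additive expression $\sum_j(g_j/e_j)\log p_j$ --- is a genuine gap, and it cannot be closed in general. For a rational prime $p$, the exponent of $p$ in $\operatorname{lcm}_{j}\,\#G^{(j)}_{g_jm}$ is $\max_{j\colon p_j=p}\bigl(g_jm/e_j\bigr)+O(1)$: contributions from distinct $\mathfrak{p}_j$ lying over the \emph{same} rational prime are absorbed into a maximum, they do not add. So your own asymptotics give $\log h_m=m\sum_{p}\bigl(\max_{j\colon p_j=p}g_j/e_j\bigr)\log p+O(1)$, which matches the numerator of \eqref{2512261352} only when the $p_j$ are pairwise distinct. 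Concretely, take $K=\Q(i)$, $\beta=5=(2+i)(2-i)$ and $\alpha=2$: then $h_m=\operatorname{ord}\bigl(2 \bmod 5^m\Z[i]\bigr)=4\cdot 5^{m-1}$, so $\log h_m/(m\log N(\beta))\to 1/2$, while \eqref{2512261352} predicts $1$. Thus either you add the hypothesis that the $\mathfrak{p}_j$ lie above pairwise distinct rational primes --- in which case your argument is complete and essentially the paper's, since the lcm then differs from the paper's product only by a bounded factor --- or you must confront the fact that your (correct) lcm computation conflicts with \eqref{2505301308} and with the formula as stated: the extra growth the paper extracts from the full product is exactly what the maximum in the lcm refuses to supply.
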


	\section{The valuation ring and Teichmuller map}
	
	Note that Theorem \ref{2505091458} concerns only a single prime ideal $\mathfrak{p}$, so it is convenient to consider the localization of \( \mathcal{O} \) at \( \mathfrak{p}\). Let \( K_{\mathfrak{p}} \) be the completion of the number field \( K \) with respect to the \( \mathfrak{p} \)-adic valuation \( v_\mathfrak{p} \). The \emph{\(\mathfrak{p} \)-adic integer ring}
	\[\mathcal{O}_\mathfrak{p} := \{ x \in K_\mathfrak{p} : v_\mathfrak{p}(x) \geq 0 \}\]
	contains the \emph{group of units}
	\[\mathcal{O}_\mathfrak{p}^\times := \{ x \in K_\mathfrak{p} : v_\mathfrak{p}(x) = 0 \}\]
	and the \emph{unique maximal ideal}
	\[\mathcal{P} := \{ x \in K_\mathfrak{p} : v_\mathfrak{p}(x) > 0 \}.\]
	
	Note that 
	$\mathcal{O}/\mathfrak{p}^k \cong \mathcal{O}_\mathfrak{p}/\mathcal{P}^k$ 
	holds for any \( k \in \mathbb{Z}_{\geq 1} \). 
	For convenience, we still denote the cyclic subgroup 
	$\langle \alpha + \mathcal{P}^k \rangle \subseteq (\mathcal{O}_\mathfrak{p}/\mathcal P^k)^\times$
	as \( G_k \).
	
	Before studying \( \ke(G_k \to G_{k-1}) \), we need to recall some basic facts about \( \mathcal{O}_\mathfrak{p} \). Fix a prime element \( \pi \in \mathcal{O}_\mathfrak{p} \), i.e. an element satisfying $ v_\mathfrak{p}(\pi) = 1$, we have the following properties (see \cite[Chapter II, Section 3 and 4]{Neu1992} for details).
	\begin{prop}\label{2512231208}
		(i) Any \( x \in \mathcal{O}_\mathfrak{p} \) has a prime factorization
		\[ x = \varepsilon \pi^i, \]
		where \( \varepsilon \in \mathcal{O}_\mathfrak{p}^{\times} \) is a unit and \( i = v_\mathfrak{p}(x) \). 
		
		\noindent(ii) The nonzero proper ideals in \( \mathcal{O}_\mathfrak{p}\) are the following principal ideals:
		\[ \mathcal{P} = (\pi), \; \mathcal{P}^2 = (\pi^2), \; \ldots, \; \mathcal{P}^k = (\pi^k), \; \ldots \]
		
		\noindent(iii) Fix a set of representatives \( R \) for \( \mathcal{O}_\mathfrak{p}/\mathcal{P} \), containing \( 0 \). Then any \( x \in \mathcal{O}_\mathfrak{p} \) has a \( \mathfrak{p} \)-adic expansion
		\begin{equation*}
			x = a_0 + a_1 \pi + a_2 \pi^2 + \cdots + a_k \pi^k + \cdots 
		\end{equation*}
		where \( a_k \in R \) for all \( k \in \mathbb{Z}_{\geq 0} \).
	\end{prop}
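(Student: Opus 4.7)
The plan is to deduce all three assertions from two ingredients: the fact that $v_{\mathfrak{p}}$ is a discrete valuation on $K_{\mathfrak{p}}$ with uniformizer $\pi$ (so that its nonnegative values partition $\mathcal{O}_{\mathfrak{p}}\setminus\{0\}$ into cosets of $\mathcal{O}_{\mathfrak{p}}^{\times}$), together with completeness of $K_{\mathfrak{p}}$ with respect to $v_{\mathfrak{p}}$, which will only be used in part (iii).

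For (i), I would set $i := v_{\mathfrak{p}}(x)$ for a nonzero $x \in \mathcal{O}_{\mathfrak{p}}$. Since $v_{\mathfrak{p}}(\pi) = 1$, additivity of $v_{\mathfrak{p}}$ gives $v_{\mathfrak{p}}(x\pi^{-i}) = 0$, so $\varepsilon := x\pi^{-i} \in \mathcal{O}_{\mathfrak{p}}^{\times}$ and $x = \varepsilon\pi^i$; both $i$ and $\varepsilon$ are then uniquely determined by $x$. For (ii), given a nonzero proper ideal $I$, let $k := \min\{v_{\mathfrak{p}}(y) : y \in I\setminus\{0\}\}$, which exists in $\Z_{\ge 0}$ by well-ordering. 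Then $I \subseteq \mathcal{P}^k$ by construction, and conversely, picking $y_0 \in I$ with $v_{\mathfrak{p}}(y_0) = k$ and factoring $y_0 = \varepsilon\pi^k$ via (i) yields $\pi^k = \varepsilon^{-1}y_0 \in I$, hence $(\pi^k) \subseteq I$.

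The real content lies in (iii), which I would prove by constructing the digits recursively. Set $x_0 := x$; given $x_k \in \mathcal{O}_{\mathfrak{p}}$, let $a_k \in R$ be the unique representative congruent to $x_k$ modulo $\mathcal{P}$, and set $x_{k+1} := \pi^{-1}(x_k - a_k) \in \mathcal{O}_{\mathfrak{p}}$ (this lies in $\mathcal{O}_{\mathfrak{p}}$ because $v_{\mathfrak{p}}(x_k - a_k) \ge 1$). A direct induction then yields
\[ x - \sum_{j=0}^{k} a_j \pi^j = x_{k+1}\pi^{k+1} \in \mathcal{P}^{k+1}, \]
so the partial sums form a Cauchy sequence with respect to $v_{\mathfrak{p}}$. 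Completeness of $K_{\mathfrak{p}}$ then promotes this Cauchy sequence to a genuine limit equal to $x$. Uniqueness of the $a_k$ follows by comparing two such expansions modulo $\mathcal{P}$, $\mathcal{P}^2$, etc., and iterating.

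The main obstacle — indeed the only nontrivial step — is the invocation of completeness in (iii): without it one only recovers the inverse-limit description $(x \bmod \mathcal{P}^k)_k$, and the passage from this compatible family to an honest convergent power series in $\pi$ is precisely what the completion $K_{\mathfrak{p}}$ is engineered to provide. The remaining assertions (i) and (ii) are purely algebraic consequences of the fact that $v_{\mathfrak{p}}$ is a discrete valuation with uniformizer $\pi$.
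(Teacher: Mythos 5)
Your proof is correct and is the standard argument; the paper does not prove this proposition at all, but simply cites Neukirch (Chapter II, Sections 3--4), where essentially this same construction appears, so there is nothing to compare beyond noting agreement. One small remark: completeness is not really the crux of your part (iii), since your displayed identity $x-\sum_{j=0}^{k}a_j\pi^j=x_{k+1}\pi^{k+1}$ already forces the partial sums to converge to $x$; completeness is only needed for the converse assertion that an arbitrary series $\sum_k a_k\pi^k$ with $a_k\in R$ converges in $\mathcal{O}_\mathfrak{p}$.
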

	
	The following lemma is rather elementary but plays a key role in our proof. 
	Recall that $p$ is the rational prime lying below $\mathfrak{p}$, and $e$ is the ramification index of $\mathfrak{p}$.
	
	\begin{lem}\label{2601061529}
		Let $a,b\in\mathcal{O}_{\mathfrak{p}}^{\times}$, if 
		$a\equiv b\!\!\pmod{\mathcal{P}^v}$ but $a\not\equiv b\!\!\pmod{\mathcal{P}^{v+1}}$ for some $v\in\Z_{\ge1}$, then 
		\begin{equation}\label{25112141150}
			a^p\equiv b^p\!\!\pmod{\mathcal{P}^{\min\{e+v,pv\}}}. 
		\end{equation}
		Moreover, if $pv>e+v$, then
		\begin{equation}\label{2512141206}
			a^p\equiv b^p\!\!\pmod{\mathcal{P}^{e+v}}\;\;\text{but }\;a^p\not\equiv b^p\!\!\pmod{\mathcal{P}^{e+v+1}}.
		\end{equation}
	\end{lem}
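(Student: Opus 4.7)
The natural approach is a direct binomial expansion. Writing $c := a - b$, the hypothesis $a \equiv b \pmod{\mathcal{P}^v}$ but $a \not\equiv b \pmod{\mathcal{P}^{v+1}}$ is exactly $v_{\mathfrak{p}}(c) = v$. Then
\[
a^p - b^p = (b+c)^p - b^p = \sum_{k=1}^{p-1}\binom{p}{k} b^{p-k} c^k + c^p,
\]
and the problem reduces to bounding $v_{\mathfrak{p}}$ of each summand. Since $\mathfrak{p}$ lies over the rational prime $p$ with ramification index $e$, we have $v_{\mathfrak{p}}(p) = e$, and since $p \mid \binom{p}{k}$ for $1 \le k \le p-1$, each intermediate term satisfies $v_{\mathfrak{p}}\bigl(\binom{p}{k} b^{p-k} c^k\bigr) \ge e + kv$ (using $b \in \mathcal{O}_{\mathfrak{p}}^{\times}$ so $v_{\mathfrak{p}}(b^{p-k}) = 0$). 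The last term contributes $v_{\mathfrak{p}}(c^p) = pv$.

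Taking the minimum over $k = 1, \dots, p-1, p$, the smallest lower bound is attained either at $k=1$ (giving $e+v$) or at $k=p$ (giving $pv$), since for intermediate $2 \le k \le p-1$ we have $e + kv \ge e + 2v > e + v$. This yields $v_{\mathfrak{p}}(a^p - b^p) \ge \min\{e + v, pv\}$, which is exactly \eqref{25112141150}.

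For the second assertion, I assume $pv > e + v$. Then the $k = p$ term has valuation strictly larger than $e + v$, and the intermediate terms $k = 2, \dots, p-1$ also have valuation $\ge e + 2v > e + v$. So the \emph{unique} term whose valuation equals $e+v$ is $\binom{p}{1} b^{p-1} c = pb^{p-1}c$, and its valuation is exactly $e + v$ because $v_{\mathfrak{p}}(p) = e$, $v_{\mathfrak{p}}(b^{p-1}) = 0$, and $v_{\mathfrak{p}}(c) = v$. By the non-archimedean property (strict triangle inequality when the minimum is achieved by a unique term), $v_{\mathfrak{p}}(a^p - b^p) = e + v$, which is \eqref{2512141206}.

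The argument is essentially a computation, so I do not anticipate a serious obstacle; the only point requiring care is verifying that $v_{\mathfrak{p}}(p) = e$ makes the minimum in step one strict at the $k=1$ term once $pv > e + v$, and that $v \ge 1$ rules out any edge case where an intermediate index $k$ could tie with $k = 1$.
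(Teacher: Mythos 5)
Your proof is correct and follows essentially the same route as the paper's: a binomial expansion of $(b+c)^p-b^p$ with $v_{\mathfrak{p}}(c)=v$, term-by-term valuation estimates using $v_{\mathfrak{p}}(p)=e$, and the observation that when $pv>e+v$ the $k=1$ term is the unique one of minimal valuation $e+v$, forcing equality by the ultrametric inequality.
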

	
	\begin{proof}
		Since $a\equiv b\!\!\pmod{\mathcal{P}^v}$ but $a\not\equiv b\!\!\pmod{\mathcal{P}^{v+1}}$, we have \( a = b + \varepsilon \pi^{v} \) for some $\varepsilon\in\mathcal{O}_\mathfrak{p}^{\times}$. Raise both sides to the $p$-th power, we obtain that
		\begin{equation}\label{2512141153}
			\begin{split}
				a^{p}-b^p& = (b  + \varepsilon \pi^{v})^p-b^p \\
				&= \binom{p}{1} b^{p-1}\varepsilon \pi^{v} + \binom{p}{2}b^{p-2} \varepsilon^2 \pi^{2v} + \cdots + \varepsilon^p \pi^{pv}. 
			\end{split} 
		\end{equation}
		Note that \( \binom{p}{j} \) is divisible by \( p \) but not \( p^2 \)  for all \( 1 \leq j \leq p-1 \), so
		\[
		v_\mathfrak{p} \left( \binom{p}{j}b^{p-j} \varepsilon^j \pi^{j v} \right) = e + jv,\quad \text{for $j\neq p$}.
		\]
		Hence
		\(
		v_\mathfrak{p} (a^p-b^p) \geq \min\{ e + v, p v \}
		\), which implies \eqref{25112141150}. 
		
		Moreover, if $pv>e+v$, $v_\mathfrak{p} \left( \binom{p}{j}b^{p-j} \varepsilon^j \pi^{j v} \right) = e + v$ only when $j=1$, thus \eqref{2512141206} holds. 
	\end{proof}
	
	We conclude this section by defining the Teichmuller map in the standard way. 
	\begin{defn}
		The Teichmuller map $\tau\colon\mathcal{O}_{\mathfrak{p}}^{\times}\to\mathcal{O}_{\mathfrak{p}}^{\times}$ is defined by
		\begin{equation}\label{2505091040}
			\tau(x)=\lim_{n\to\infty}x^{p^{fn}},
		\end{equation}
		where  $f$ is the residual degree of $\mathfrak{p}$ and $p$ is the rational prime lying below $\mathfrak{p}$. 
	\end{defn}
	
	Recall that the $\mathfrak{ p}$-adic absolute value is defined as $|x|_\mathfrak{p}:=c^{-v_{\mathfrak{p}}(x)}$ for $x\in K_\mathfrak{ p}$, where $c$ is some constant strictly greater than $1$.
	\begin{lem}\label{25050091445}     	
		For any $x \in \mathcal{O}_{\mathfrak{p}}^{\times}$, $\tau(x)$ is well-defined and we have 
		\begin{enumerate}[\upshape(i)]
			\item\label{2505091447}  $\tau(x)$ satisfies the equation $\big(\tau(x)\big)^{p^f}=\tau(x)$;
			\item\label{2505091446}  we have $\tau(x)\equiv x\pmod{\mathcal{P}}$. 
		\end{enumerate}
	\end{lem}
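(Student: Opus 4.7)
The plan is to verify convergence of $(x^{p^{fn}})_n$ via Lemma \ref{2601061529}, and then read off properties (i) and (ii) by continuity.

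First I would observe that the residue field $\mathcal{O}_\mathfrak{p}/\mathcal{P}$ is a finite field of cardinality $p^f$, so Fermat's little theorem gives $x^{p^f}\equiv x\pmod{\mathcal{P}}$ for every $x\in\mathcal{O}_\mathfrak{p}^\times$. Setting $a_n:=x^{p^{fn}}$, this yields $v_\mathfrak{p}(a_1-a_0)=v_\mathfrak{p}(x^{p^f}-x)\ge 1$, and by iterating the congruence, $a_n\equiv x\pmod{\mathcal{P}}$ for every $n\ge 0$. Thus statement (ii) will follow automatically once convergence is established.

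Next I would show that $(a_n)$ is $\mathfrak{p}$-adically Cauchy. Observe that $a_{n+1}-a_n=a_n^{p^f}-a_{n-1}^{p^f}$. A single application of Lemma \ref{2601061529} to a pair with valuation distance $v\ge 1$ gives $v_\mathfrak{p}(a^p-b^p)\ge\min\{e+v,pv\}\ge v+1$, so iterating $f$ times yields
\[
v_\mathfrak{p}(a_{n+1}-a_n)\;\ge\;v_\mathfrak{p}(a_n-a_{n-1})+f.
\]
Induction on $n$ then gives $v_\mathfrak{p}(a_{n+1}-a_n)\ge v_\mathfrak{p}(a_1-a_0)+nf\to\infty$. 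Since $\mathcal{O}_\mathfrak{p}$ is complete, the limit $\tau(x):=\lim_n a_n$ exists and lies in $\mathcal{O}_\mathfrak{p}$; moreover, $\tau(x)\not\equiv 0\pmod{\mathcal{P}}$ (by (ii) applied in the limit), so $\tau(x)\in\mathcal{O}_\mathfrak{p}^\times$.

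Finally I would close out (i) by noting that $y\mapsto y^{p^f}$ is a polynomial map, hence continuous in the $\mathfrak{p}$-adic topology, so
\[
\tau(x)^{p^f}=\lim_{n\to\infty}a_n^{p^f}=\lim_{n\to\infty}a_{n+1}=\tau(x);
\]
and for (ii), the continuity of the quotient map $\mathcal{O}_\mathfrak{p}\to\mathcal{O}_\mathfrak{p}/\mathcal{P}$ together with the congruences $a_n\equiv x\pmod{\mathcal{P}}$ established above yields $\tau(x)\equiv x\pmod{\mathcal{P}}$. There is no real obstacle; the one point requiring care is the elementary numerical bound $\min\{e+v,pv\}\ge v+1$ for $v\ge 1$, which is exactly what makes the $\mathfrak{p}$-adic valuation of successive differences strictly grow and thus forces the sequence to converge.
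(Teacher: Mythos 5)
Your proof is correct and follows essentially the same route as the paper: you use Lemma \ref{2601061529} to force the valuations of successive differences $x^{p^{f(n+1)}}-x^{p^{fn}}$ to grow without bound (the paper phrases this via $x^{(p^f-1)p^{fm}}\equiv 1\pmod{\mathcal{P}^{1+fm}}$, you via Fermat's relation $x^{p^f}\equiv x\pmod{\mathcal{P}}$ applied to consecutive terms, which is the same mechanism), then invoke completeness for convergence, continuity of $y\mapsto y^{p^f}$ for (i), and the mod-$\mathcal{P}$ congruences for (ii).
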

	
	\begin{proof}
		We first show that the sequence $(x^{p^{fn}})_{n\ge1}$ is a Cauchy sequence with respect to the $\mathfrak{p}$-adic metric, hence the limit \eqref{2505091040} exists.
		
		For each $x\in\mathcal{O}_{\mathfrak{p}}^{\times}$, since $\#(\mathcal{O}_{\mathfrak{p}}/\mathcal{P})^\times=p^f-1$,  we have $x^{p^f-1}\equiv1\pmod{\mathcal{P}}$.
		Apply Lemma \ref{2601061529} with $a=x^{p^f-1}$ and $b=1$, we have $x^{(p^f-1)p}\equiv1\pmod{\mathcal{P}^{2}}$. Apply Lemma \ref{2601061529} repeatedly, we obtain that
		\begin{equation}\label{2505091439}
			x^{(p^f-1)p^{fm}}\equiv 1\!\!\pmod{\mathcal{P}^{1+fm}}\quad\text{for all $m\in\Z_{\ge0}$}.
		\end{equation}
		Thus $\big|x^{p^{f(m+1)}}- x^{p^{fm}}\big|_{\mathfrak{p}}=\big|x^{p^{fm}}\big|_{\mathfrak{p}}\cdot\big|x^{(p^f-1)p^{fm}}-1\big|_{\mathfrak{p}}\le c^{-1-fm}$ for all $m\in\Z_{\ge0}$, where $|\cdot|_\mathfrak{p}:=c^{-v_{\mathfrak{p}}(\cdot)}$ for some constant $c>1$. Using this inequality, we obtain that for any integers $n>m\ge1$, 
		\[\begin{split}
			|x^{p^{fn}}- x^{p^{fm}}|_{\mathfrak{p}}&\le\max\big\{|x^{p^{fn}}- x^{p^{f(n-1)}}|_{\mathfrak{p}},\dots,|x^{p^{f(m+1)}}- x^{p^{fm}}|_{\mathfrak{p}}\big\}\\
			&\le c^{-1-fm}\to0\;\;(\text{as } m\to\infty),
		\end{split}\]
		which shows that the sequence $(x^{p^{fn}})_{n\ge1}$ is a Cauchy sequence.
		
		For all $m\in\Z_{\ge0}$, multiply both sides of \eqref{2505091439} by $x^{p^{fm}}$, we have
		\[x^{p^{f(m+1)}}\equiv x^{p^{fm}}\pmod{\mathcal{P}^{1+fm}}.\]
		Hence
		\[x^{p^{fm}}\equiv x^{p^{f(m-1)}}\equiv\cdots\equiv x\pmod{\mathcal{P}}\]
		for all $m\in\Z_{\ge1}$. Letting $m\to\infty$, we obtain Lemma \ref{25050091445} \eqref{2505091446}. 
		
		To see Lemma \ref{25050091445} \eqref{2505091447}, we note that $y\mapsto y^{p^f}$ is a continuous map on $K_{\mathfrak{p}}$, so
		\[(\tau(x))^{p^f}=\big(\lim_{n\to\infty}x^{p^{fn}}\big)^{p^f}=\lim_{n\to\infty}x^{p^{f(n+1)}}=\tau(x).\qedhere\]
	\end{proof}
	
	\section{Proof of Theorem \ref{2505091458}}
	
	Let us start with the direct product decomposition of \((\mathcal{O}_\mathfrak{p} / \mathcal{P}^k)^{\times}\) for each $k\in\Z_{\ge1}$. Denote the multiplicative group of principal units modulo $\mathcal{P}^k$
	\[
	\{ x+\mathcal{P}^k  : x \equiv 1 \!\!\pmod{\mathcal{P}} \}
	\]
	as \(U_1^{(k)}\). Consider the following short exact sequence
	\[
	1 \longrightarrow U_1^{(k)} \longrightarrow (\mathcal{O}_\mathfrak{p} / \mathcal{P}^k)^{\times} \xrightarrow[\quad\;\;]{\varphi_k} (\mathcal{O}_\mathfrak{p} /\mathcal{ P})^{\times} \longrightarrow 1,
	\]
	where \( \varphi_k: x \bmod \mathcal{P}^k \mapsto x \bmod \mathcal{P} \). Note that this short exact sequence splits, because there exists a homomorphism 
	\[
	\begin{split}
		\psi_k: (\mathcal{O}_\mathfrak{p} /\mathcal{ P})^{\times} &\longrightarrow\;\;(\mathcal{O}_\mathfrak{p} / \mathcal{P}^k)^{\times}\\
		a+\mathcal{P}\quad&\longmapsto \;\;\tau(a)+\mathcal{P}^k,
	\end{split}
	\]
	where \( a \in R \) (the set of representatives as in Proposition \ref{2512231208} (iii)) and \(\tau\) is the Teichmüller map; and one can check that
	\[
	\varphi_k \circ \psi_k = \operatorname{id}_{(\mathcal{O}_\mathfrak{p} /\mathcal{ P})^{\times}}.
	\]
	Hence
	\begin{equation}\label{2512231239}
		(\mathcal{O}_\mathfrak{p} / \mathcal{P}^k)^{\times} \cong (\mathcal{O}_\mathfrak{p} /\mathcal{ P})^{\times} \times U_1^{(k)}.
	\end{equation}
	
	Suppose \(\alpha \in (\mathcal{O}_\mathfrak{p})^{\times}\) is not a root of unity and its \(\mathfrak{p}\)-adic expansion is
	\[ \alpha = a_0 + a_1 \pi + a_2 \pi^2 + \cdots + a_k \pi^k + \cdots. \]
	Note that
	\[
	\begin{split}
		\varphi_k \bigl( \alpha \big(\tau(a_0)\big)^{-1} + \mathcal{ P}^k \bigr) 
		&= \varphi_k (\alpha +\mathcal{ P}^k) \cdot \varphi_k \big(\, \tau(a_0)^{-1} + \mathcal{ P}^k\big)\\
		&= (a_0 + \mathcal{ P}) \cdot (a_0 + \mathcal{ P})^{-1} \\
		&= 1 + \mathcal{ P},
	\end{split}
	\]
	thus \( \alpha (\tau(a_0))^{-1} + \mathcal{ P}^k \in U_1^{(k)}\). Therefore, the isomorphism in \eqref{2512231239} applies to \(\alpha\) as:
	\[
	\alpha +\mathcal{ P}^k = (\tau(a_0) +\mathcal{ P}^k) \bigl( \alpha (\tau(a_0))^{-1} +\mathcal{ P}^k \bigr) 
	\longmapsto (a_0 + \mathcal{ P}) \times \big(\alpha (\tau(a_0))^{-1}+ \mathcal{ P}^k\big).
	\]
	Hence we obtain the isomorphism
	\begin{equation}\label{2512241146}
		G_k = \langle \alpha +\mathcal{ P}^k \rangle \cong \langle a_0 + \mathcal{ P} \rangle \times \langle \alpha (\tau(a_0))^{-1} +\mathcal{ P}^k \rangle.
	\end{equation}
	Let $\eta=\alpha (\tau(a_0))^{-1}$, by \eqref{2512241146}, we have
	\begin{equation}\label{2512251206}
		\#\ke(G_k \to G_{k-1})=\#\ke\big(\langle\eta + \mathcal{ P}^k \rangle \to \langle\eta + \mathcal{ P}^{k-1} \rangle\big).
	\end{equation}
	
	We proceed to calculate the order of \(\eta + \mathcal{ P}^k\) in $U_1^{(k)}$, which is denoted by \(\operatorname{Ord}_k(\eta)\). By \cite[Theorem 80]{Hecke}, we obtain that  
	\[
	\begin{split}
		\#(\mathcal{O}_\mathfrak{p} / \mathcal{P}^k)^{\times}&= N(\mathfrak p^k)\left(1 - \frac{1}{N( \mathfrak{p})}\right)\\
		&= p^{f(k-1)}(p^f - 1),\quad \text{and}\\
		\#(\mathcal{O}_\mathfrak{p} / \mathcal{P})^{\times} &= p^f - 1,
	\end{split}
	\] 
	Combining this with \eqref{2512231239}, we have $\#U_1^{(k)} = p^{f(k-1)}$, hence \(\operatorname{Ord}_k(\eta)\) must be a power of \(p\). 
	
	Denote $v(j)=v_\mathfrak{p}(\eta^{p^j}-1)$ for each  $j\in\Z_{\ge0}$. Since $\eta^{p^{j}}\equiv 1\!\!\pmod{\mathcal{P}^{v(j)}}$ but $\eta^{p^{j}}\not\equiv 1\!\!\pmod{\mathcal{P}^{v(j)+1}}$, we can apply Lemma \ref{2601061529} with $a=\eta^{p^{j}}$ and $b=1$, and obtain
	\[
	v(j+1) \geq \min\{e + v(j), p v(j)\} > v(j).
	\]  
	Since \(\big(v(j)\big)_{j \geq 1}\) is strictly increasing, there exists \(l \in \Z_{\ge0}\) such that \(v(l) > e\).  
	When \(j \geq l\), since $pv(j)>e+v(j)$, we can apply \eqref{2512141206} in Lemma \ref{2601061529} with $a=\eta^{p^{j}}$ and $b=1$ to deduce
	\begin{equation}\label{2512251005}
		v(j+1)= v(j) + e.
	\end{equation}
	
	For each \( k > v(l) \), let \( \widetilde{k} = \left\lceil (k - v(l))e^{-1} \right\rceil \), where $\lceil x\rceil$ is the smallest integer not less than $x$. Using \eqref{2512251005} repeatedly, we have
	\[
	\begin{split}
		v(l+\tilde{k}) &= v(l) + e\widetilde{k}\\
		&\geq v(l) + (k - v(l)) = k.
	\end{split}
	\]
	Hence
	\(
	\eta^{p^{l+\widetilde{k}}} \equiv 1 \pmod{\mathcal{P}^k}
	\). Moreover, by using \eqref{2512251005} $\widetilde{k}-1$ times, we have
	\[
	\begin{split}
		v(l+\widetilde{k}-1) &= v(l) + e(\widetilde{k}-1)\\
		&< v(l) + k - v(l) = k,
	\end{split}
	\]
	thus $\eta^{p^{l+\widetilde{k}-1}} \not\equiv 1 \pmod{\mathcal{ P}^k}$. Therefore
	\begin{equation}
		\operatorname{Ord}_k(\eta)=p^{l+\widetilde{k}}.
	\end{equation}
	\begin{itemize}
		\item  If \( k - v(l) \equiv 1 \pmod{e} \), we can write \( k - v(l) = h e + 1 \), then
		\[
		\begin{split}
			\operatorname{Ord}_k(\eta) &= p^{l + \left\lceil \frac{h e + 1}{e} \right\rceil}
			= p^{l + h + 1},\\
			\operatorname{Ord}_{k-1}(\eta) &= p^{l + \left\lceil \frac{h e}{e} \right\rceil}
			= p^{l + h}.
		\end{split}
		\]
		\item  If \( k -v(l) \not\equiv 1 \pmod{e} \), we can write \( k - v(l) = h e + r \) with $r\in\{2,3,\dots,e\}$, then
		\[
		\begin{split}
			\operatorname{Ord}_k(\eta) &= p^{l + \left\lceil \frac{h e + r}{e} \right\rceil}
			= p^{l + h + 1},\\
			\operatorname{Ord}_{k-1}(\eta) &= p^{l + \left\lceil \frac{h e+r-1}{e} \right\rceil}
			= p^{l + h+1}.
		\end{split}
		\]
	\end{itemize}
	Combining this with \eqref{2512251206}, 
	\[
	\begin{split}
		&\#\ke(G_k\to G_{k-1})=\#\ke\bigl( \langle \eta + \mathcal{ P}^k \rangle \to \langle \eta + \mathcal{ P}^{k-1} \rangle \bigr) \\
		=& \frac{\#\langle\eta + \mathcal{ P}^k \rangle}{\#\langle  \eta + \mathcal{ P}^{k-1}\rangle}
		=\begin{cases}
			p,&\quad\text{if \( k - v(l) \equiv 1 \pmod{e} \)},\\
			1,&\quad\text{if \( k - v(l) \not\equiv 1 \pmod{e} \)},
		\end{cases}
	\end{split}
	\]
	which completes the proof of Theorem \ref{2505091458}.

	\section{Proof of Theorem \ref{2505041026} and Theorem \ref{2512261042}}
	
	Let $\alpha, \beta \in \mathcal{O}$ be as in Theorem \ref{2505041026} and recall that
	\[H_m = \langle \alpha+\beta^m\mathcal{O}\ \rangle \subseteq (\mathcal{O}/\beta^m\mathcal{O})^\times,\]
	is the multiplicative subgroup generated by $\alpha$ modulo $\beta^m\mathcal{O}$.    
	
	Since $(\beta)=\mathfrak{p}_1^{g_1}\cdots\mathfrak{p}_h^{g_h}$, by the Chinese remainder theorem, there exists an isomorphism  
	\[
	\alpha^n\!\! \!\! \pmod{\beta^m} \mapsto \left( \alpha^n \!\! \!\! \pmod{\mathfrak{p}_1^{g_1 m}}, \ldots, \alpha^n \!\! \!\! \pmod{\mathfrak{p}_h^{g_h m}} \right)
	\]
	such that
	\begin{equation}\label{2505301308}
		H_m \cong G_{g_1m}^{(1)} \times G_{g_2m}^{(2)} \times \cdots \times G_{g_hm}^{(h)},
	\end{equation}
	where $G_{k}^{(j)} := \langle \alpha+\mathfrak{ p}_j^{k} \rangle \subseteq (\mathcal{O}/\mathfrak{p}_j^{k})^\times$ for all $j\in\{1,2,\dots,h\}$. 
	
	\subsection{Proof of Theorem \ref{2505041026}}
	Since $\mathfrak{ p}_j$ is unramified for all $j\in\{1,2,\dots,h\}$, by Theorem \ref{2505091458}, there exists an positive integer $M$ such that
	\[\#G_{k}^{(j)}=\#\ke(G_{k}^{(j)}\to G_{k-1}^{(j)})\cdot\#G_{k-1}^{(j)}=p_j\cdot\#G_{k-1}^{(j)},\]
	for all $k\ge M$ and $j\in\{1,2,\dots,h\}$, where $p_j$ is the rational prime lying below $\mathfrak{p}_j$. Combining this with 
	\eqref{2505301308}, we have
	\begin{equation}\label{2505301327}
		\begin{split}
			h_m&:=\# H_m = \prod_{j=1}^h \# G_{g_jm}^{(j)} = \prod_{j=1}^h \left(p_j^{g_j} \# G_{g_j(m-1)}^{(j)}\right)\\
			& =\prod_{j=1}^h N(\mathfrak{ p}_j)^{g_j}\cdot\prod_{j=1}^h\# G_{g_j(m-1)}^{(j)}= N(\beta) \cdot\# H_{m-1}=N(\beta) h_{m-1},
		\end{split}
	\end{equation}
	for all $m\ge M+1$. 
	
	Recall that $(\alpha^n)_{\beta,i}$ is the $i$-th digit in the $\beta$-adic expansion of $\alpha^n$, we define 
	\[ D_{n,m}(b):=\#\{0 \leq i < m :  (\alpha^n)_{\beta,i} = b\},\]
	and 
	\[D_{m}(b):=\sum_{n=1}^{h_m} D_{n,m}(b),\]
	for each digit $b\in\mathcal{D}$. 
	
	Denote $(a_0, a_1,\cdots, a_{m-1})_{\beta}:=\sum_{j=0}^{m-1}a_j\beta^j$ for $(a_0, a_1,\cdots, a_{m-1})\in\mathcal{D}^m$, we have
	\[
	\begin{split}
		D_{m}(b)& = \sum_{n=1}^{h_m} \#\{0 \leq i < m :  (\alpha^n)_{\beta,i} = b\}\\
		&= \sum_{(a_0, a_1,\cdots, a_{m-1})_{\beta} \in H_m} \#\{0 \leq i < m :  a_i = b\}\\
		&=\sum_{(a_0, a_1,\cdots, a_{m-2})_{\beta} \in H_{m-1}}\;\sum_{a_{m-1} \in \mathcal D} \#\{ 0 \leq i < m : a_i = b\}, 
	\end{split}
	\]
	where we use \eqref{2505301327} in the last equality. Note that if $a_{m-1}=b$, then $$\#\{ 0 \leq i < m : a_i = b\}=1+\#\{ 0 \leq i < m-1 : a_i = b\},$$
	otherwise, $$\#\{ 0 \leq i < m : a_i = b\}=\#\{ 0 \leq i < m-1 : a_i = b\}.$$ Thus 
	\[
	\begin{split}
		D_{m}(b)&=\sum_{(a_0, a_1,\cdots, a_{m-2})_{\beta} \in H_{m-1}}(N(\beta)\cdot  \#\{ 0 \leq i < m-1 : a_i = b\}+1)\\
		&= N(\beta) D_{m-1}(b) + h_{m-1}.
	\end{split}
	\]
	Repeating the process above, we have
	\begin{equation}\label{2601061745}
		\begin{split}
			D_{m}(b)
			&= \big(N(\beta)\big)^2  D_{m-2}(b) + N(\beta) h_{m-2} + h_{m-1}\\
			&\vdots \\
			&= (N(\beta))^{m-M}  D_{M}(b) + (N(\beta))^{m-M-1} h_{M} + \cdots + h_{m-1}\\
			&= (N(\beta))^{m-M} D_{M}(b) + (m - M) (N(\beta))^{m-M-1} h_{M},
		\end{split}
	\end{equation}
	where we use \eqref{2505301327} in the last equality.
	Therefore, by \eqref{2601061743}, \eqref{2601061744}, \eqref{2601061745} and 
	\[ h_m = (N(\beta))^{m-M} h_{M},\]
	we have
	\[
	\begin{split}
		f_{\alpha,m}(b)&=\frac{1}{h_m}\sum_{n=1}^{h_m}f_{\alpha,n,m}(b)\\
		&=\frac{D_{m}(b)}{mh_m} = \frac{D_{M}(b)}{mh_M} + \frac{m - M}{m} (N(\beta))^{-1}.
	\end{split} 
	\]  Letting \( m \to \infty \), we obtain 
	\[\lim_{m \to \infty}f_m(b) = (N(\beta))^{-1} = 1/\#\mathcal{D},\] which completes the proof of Theorem \ref{2505041026}.
	
	\subsection{Proof of Theorem \ref{2512261042}}
	Since the ramification index of $\mathfrak{ p}_j$ is $e_j$, by Theorem \ref{2505091458}, there exists an positive integer $M$ such that for all $k\ge M$ and $j\in\{1,2,\dots,h\}$,
	\begin{align}
		\#G_{k}^{(j)}&=\#\ke(G_{k}^{(j)}\to G_{k-1}^{(j)})\cdot\#G_{k-1}^{(j)}  \notag\\
		&=\#\ke(G_{k}^{(j)}\to G_{k-1}^{(j)})\cdot\#\ke(G_{k-1}^{(j)}\to G_{k-2}^{(j)})\cdot\#G_{k-2}^{(j)}\notag\\
		&=\cdots=p_j\cdot\#G_{k-e_j}^{(j)},\label{2512261132}
	\end{align}
	where $p_j$ is the rational prime lying below $\mathfrak{p}_j$. Applying \eqref{2512261132} repeatedly, we have
	\begin{equation}
		p_j^{\big[\frac{k-M}{e_j}\big]}\#G_{M}^{(j)}\le\#G_{k}^{(j)}\le p_j^{\big[\frac{k-M}{e_j}\big]+1}\#G_{M}^{(j)}.
	\end{equation}
	Combining this with 
	\eqref{2505301308}, we have
	\begin{equation}
		\begin{split}
			\mathcal{C}_m(\alpha)&=\# H_m = \prod_{j=1}^h \# G_{g_jm}^{(j)} \le\prod_{j=1}^h \Big(p_j^{\big[\frac{g_jm-M}{e_j}\big]+1}\#G_{M}^{(j)}\Big),\;\;\text{ and }\\
			\mathcal{C}_m(\alpha)& \ge\prod_{j=1}^h \Big(p_j^{\big[\frac{g_jm-M}{e_j}\big]}\#G_{M}^{(j)}\Big),
		\end{split}
	\end{equation}
	for all $m\ge M$. Hence, setting $C_M:=\log(\prod_{j=1}^h\#G_{M}^{(j)})$, 
	\[\begin{split}
		&\frac{C_M+\sum_{j=1}^{h}(\frac{g_jm-M}{e_j}-1)\log p_j}{m\sum_{j=1}^{h}f_jg_j\log p_j}
		\le\frac{\log\mathcal{C}_m(\alpha)}{m\log N(\beta)}\\
		\le&\;\frac{C_M+\sum_{j=1}^{h}(\frac{g_jm-M}{e_j}+1)\log p_j}{m\sum_{j=1}^{h}f_jg_j\log p_j},
	\end{split}\]
	where we use $N(\beta)=\prod_{j=1}^{h}N(\mathfrak{ p})^{g_j}=\prod_{j=1}^{h}p_j^{f_jg_j}$. Letting \( m \to \infty \), we obtain \eqref{2512261352}, which completes the proof of Theorem \ref{2512261042}.
	
	
	
	\section*{Acknowledgements}
	R.~Li was supported by NSFC No.~12401006 and Guangdong Basic and
	Applied Basic Research Foundation No.~2023A1515110272. J.~Zhao was supported by NSFC No.~12471085 and Science and Technology Commission of Shanghai Municipality (STCSM) No. 22DZ2229014.

\end{document}